\newtheorem{theorem}{Theorem}[section]
\newtheorem{corollary}[theorem]{Corollary}
\newtheorem{lemma}[theorem]{Lemma}
\newtheorem{obs}[theorem]{Observation}
\theoremstyle{definition}
\newcommand{\F}{\mathbb{F}}
\newcommand{\Z}{\mathbb{Z}}
\DeclareMathOperator{\rank}{rank}
\DeclareMathOperator{\supp}{supp}
\DeclareMathOperator{\im}{im}
\newcommand{\abs}[1]{\left\lvert#1\right\rvert}
\title{S\'ark\"ozy's Theorem in Various Finite Field Settings}
\author{Anqi Li and Lisa Sauermann}\thanks{Li was supported by Mariana Polonsky Slocum (1955) Memorial Fund as part of the MIT Undergraduate Research Opportunities Program (UROP). Sauermann was supported by NSF Award DMS-2100157 and a Sloan Research Fellowship.} 
\address{Massachusetts Institute of Technology, Cambridge, MA 02139, USA}
\email{\{anqili,lsauerma\}@mit.edu}
\begin{document}

\maketitle
\begin{abstract}
    In this paper, we strengthen a result by Green about an analogue of S\'ark\"ozy's theorem in the setting of polynomial rings $\mathbb{F}_q[x]$. In the integer setting, for a given polynomial $F \in \mathbb{Z}[x]$ with constant term zero, (a generalization of) S\'ark\"ozy's theorem gives an upper bound on the maximum size of a subset $A \subset \{1, \ldots, n \}$ that does not contain distinct $a_1,a_2 \in A$ satisfying $a_1 - a_2 = F(b)$ for some $ b \in \mathbb{Z}$. Green proved an analogous result with much stronger bounds in the setting of subsets $A \subset \mathbb{F}_q[x]$ of the polynomial ring $\mathbb{F}_q[x]$, but required the additional condition that the number of roots of the polynomial $F \in \mathbb{F}_q[x]$ is coprime to $q$. We generalize Green’s result, removing this condition. As an application, we also obtain a version of S\'ark\"ozy’s theorem with similarly strong bounds for subsets $A \subset \mathbb{F}_q$ for $q = p^n$ for a fixed prime $p$ and large $n$. 
\end{abstract}

\section{Introduction}

In this paper, we study variants of S\'ark\"ozy's theorem \cite{S78}.

\begin{theorem}[S\'ark\"ozy's Theorem]\label{thm:Sarkozy}
Let $\alpha(n)$ be the maximum size of a subset $A \subset \{1, 2, \ldots, n\}$ such that there do not exist distinct $a_1, a_2 \in A$ with $a_1 - a_2 = b^2$ for some $b \in \mathbb{Z}$. Then $\lim \limits_{n \rightarrow \infty} \alpha(n)/n = 0$.
\end{theorem}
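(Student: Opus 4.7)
The plan is to prove Sárközy's theorem via a Fourier-analytic density-increment argument. I would embed $A \subset \{1,\dots,n\}$ inside $\mathbb{Z}/N\mathbb{Z}$ for $N$ a prime slightly larger than $2n$, so that ``$a_1 - a_2 = b^2$ with $1 \le b \le \sqrt n$'' lifts unambiguously to a congruence mod $N$. Writing $\delta = |A|/N$ and letting $s$ denote the indicator of $\{b^2 \bmod N : 1 \le b \le \lfloor \sqrt n \rfloor\}$, Fourier inversion expresses the count
\begin{equation*}
T := \#\{(a_1,a_2,b) \in A \times A \times \{1,\dots,\lfloor\sqrt n\rfloor\} : a_1 - a_2 \equiv b^2 \pmod{N}\} = \sum_{\xi \in \mathbb{Z}/N\mathbb{Z}} \lvert \widehat{1_A}(\xi)\rvert^2 \widehat{s}(\xi),
\end{equation*}
whose principal term ($\xi = 0$) is $\delta^2 N \lfloor \sqrt n\rfloor$. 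If $A$ has no two distinct elements differing by a nonzero square, then $T = 0$ (since $|a_1 - a_2| < n < N/2$ and $b^2 < N/2$ force the congruence to be an actual equality in $\mathbb{Z}$), so the non-principal Fourier terms must cancel the main term; by Parseval, this forces some nonzero $\xi$ with $\lvert \widehat{s}(\xi)\rvert \gg \delta \sqrt n$.

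The key analytic input is a Weyl-type estimate: the exponential sum $\sum_{b=1}^{\lfloor \sqrt n \rfloor} e^{2\pi i \xi b^2 / N}$ can attain magnitude as large as $\delta \sqrt n$ only when $\xi/N$ is well-approximated by a rational $a/q$ with denominator $q$ bounded in terms of $\delta$ alone; this ``major arc'' concentration of the squares follows from standard Gauss-sum analysis. Given such a $\xi$, a pigeonhole argument over cosets modulo $q^2$ then produces an arithmetic progression $P \subset \{1,\dots,n\}$ of common difference $q^2$ and length at least $n^{c}$ (with $c > 0$ depending on $\delta$) on which the density of $A$ satisfies $\lvert A \cap P\rvert / \lvert P\rvert \ge \delta + c'\delta^2$ for some absolute constant $c' > 0$.

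Crucially, because $q^2$ is itself a perfect square, rescaling $P$ back to an interval $\{1,\dots,n'\}$ preserves the square-difference-free property: any relation $q^2(j_1 - j_2) = b^2$ forces $j_1 - j_2 = (b/q)^2$ to itself be a square. One thus obtains a square-difference-free subset of $\{1,\dots,n'\}$ with $n' \ge n^c$ of density at least $\delta + c'\delta^2$. Iterating this increment $O(1/\delta)$ times would push the density past $1$, the desired contradiction, provided $n$ was large enough at the outset; this yields $\alpha(n)/n \to 0$ with a quantitative rate like $(\log \log n)^{-c''}$. The main obstacle is proving the Weyl/Gauss-sum bound pinning down the major arcs for squares; a secondary, bookkeeping obstacle is ensuring that after $\sim 1/\delta$ iterations, each shrinking the ambient interval from $n$ down to $n^c$, the remaining interval is still long enough for the Fourier counting argument to have a nontrivial main term.
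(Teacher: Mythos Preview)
The paper does not contain a proof of Theorem~\ref{thm:Sarkozy}. It is stated only as classical background, with a citation to S\'ark\"ozy's original paper \cite{S78}; the paper's own arguments concern the polynomial-ring analogues (Theorem~\ref{thm:S} and Corollary~\ref{cor:Fq}) and proceed via the Croot--Lev--Pach polynomial method, not via Fourier analysis or density increment. So there is no ``paper's own proof'' to compare against here.

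That said, your outline is a faithful sketch of the standard Fourier-analytic density-increment proof in the spirit of S\'ark\"ozy and Furstenberg. The essential ingredients---embedding in $\mathbb{Z}/N\mathbb{Z}$ with $N$ prime, the Fourier expansion of the count $T$, the Weyl/Gauss-sum estimate forcing any large nonzero Fourier coefficient of the squares to live on a major arc $\xi/N \approx a/q$, the density increment on a progression of modulus $q^2$, and the key observation that a $q^2$-dilation preserves the square-difference-free property---are all correctly identified, and your two flagged obstacles (the exponential-sum input and the iteration bookkeeping) are exactly the places where the real work lies. One small point: rather than defining $s$ as the \emph{indicator} of the set of residues $\{b^2 \bmod N\}$, it is cleaner to take $s$ to be the \emph{counting} function $s(r) = \#\{1 \le b \le \lfloor\sqrt n\rfloor : b^2 \equiv r\}$, so that $\widehat{s}(\xi)$ is literally the Weyl sum $\sum_b e^{2\pi i \xi b^2/N}$; in your parameter range these coincide, but the counting version is what one actually bounds. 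With those caveats, the plan is sound and, when carried out, yields the qualitative statement $\alpha(n)/n \to 0$ (with a quantitative rate of roughly $(\log\log n)^{-c}$, weaker than the Bloom--Maynard bound quoted in the paper but sufficient for the theorem as stated).
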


A natural generalization of this theorem is to replace $b^2$ by another polynomial $F(b)$, yielding the following result (observed for example in \cite{KM78}). 

\begin{theorem}[Generalization of S\'ark\"ozy's Theorem]\label{thm:Sarkozy2}
Let $F \in \Z[x]$ be a polynomial of degree $k$ with constant term zero. Let $\beta_{k}(n)$ be the maximum size of a subset $A \subset \{1, \ldots, n \}$ such that there do not exist distinct $a_1, a_2 \in A$ with $a_1 - a_2 = F(b)$ for some $b \in \Z$. Then $\lim \limits_{n \rightarrow \infty} \beta_k(n)/n = 0$.
\end{theorem}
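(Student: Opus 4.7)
The plan is to follow the Fourier-analytic density increment strategy originally due to S\'ark\"ozy. Let $k = \deg F$ and recall $F(0) = 0$. Suppose $A \subset \{1, \ldots, n\}$ has density $\delta = |A|/n$ and contains no distinct pair $a_1, a_2 \in A$ with $a_1 - a_2 = F(b)$ for some $b \in \Z$. The goal is the density increment: whenever $\delta$ is not too small, there is an arithmetic progression $P \subset \{1, \ldots, n\}$ of polynomially large length on which $|A \cap P|/|P| \geq \delta(1 + c\delta)$. Iterating this $O(1/\delta)$ times forces $\delta \to 0$ as $n \to \infty$, which is exactly the assertion $\beta_k(n)/n \to 0$.

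I would embed $A$ into $\Z/N\Z$ for some $N$ of order $n$, chosen so that the values $F(b)$ for $|b| \leq B := c_1 n^{1/k}$ are unambiguously represented in $\Z/N\Z$. Writing
\[
T(A) := \#\{(a_1, a_2, b) \in A \times A \times [-B, B] : a_1 - a_2 = F(b)\},
\]
a Fourier expansion on $\Z/N\Z$ gives
\[
T(A) = \frac{1}{N} \sum_{\xi \in \Z/N\Z} |\widehat{1_A}(\xi)|^2 S(\xi), \qquad S(\xi) := \sum_{|b| \leq B} e\!\left(\tfrac{F(b)\xi}{N}\right).
\]
The $\xi = 0$ term is the main term, of order $\delta^2 \cdot n \cdot B$. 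Since $A$ is $F(b)$-difference-free, $T(A)$ only counts trivial contributions from $a_1 = a_2$ (which force $F(b)=0$ and so give $O(|A|)$ terms). Thus the $\xi \neq 0$ contribution must essentially cancel the main term, forcing a substantial Fourier coefficient of $1_A$ at some $\xi \neq 0$.

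The key analytic input is \emph{Weyl's inequality} for polynomial exponential sums: for each $\xi \neq 0$, either $\xi/N$ admits a rational approximation $a/q$ with $q$ very small (the \emph{major arc} case), or $|S(\xi)| \ll B^{1-\sigma}$ for some $\sigma = \sigma(k) > 0$. Combined with Parseval's identity $\sum_\xi |\widehat{1_A}(\xi)|^2 = N|A|$, the minor-arc contribution to $T(A)$ is negligible compared to the main term. Consequently some nonzero $\xi$ lying in a major arc must satisfy $|\widehat{1_A}(\xi)| \gtrsim \delta N^{1/2}$ up to logarithmic losses. A standard pigeonhole argument then converts this spike at $\xi/N \approx a/q$ with small $q$ into an arithmetic progression $P$ of length $\sim \sqrt{n}/q$ on which $|A \cap P|/|P| \geq \delta(1 + c\delta)$.

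Rescaling $P$ affinely to $\{1, \ldots, |P|\}$ transforms the problem into one of the same form: the set $A \cap P$, viewed inside its rescaled ambient interval, must still avoid differences of the shape $F'(b)$, where $F'$ has the same degree $k$ as $F$ (the common difference of $P$ is absorbed into the coefficients). Hence the increment step is genuinely self-similar and iteration is permitted. The main obstacle, and the technical heart of the proof, is the bookkeeping across iterations: the coefficients of the rescaled polynomial grow, so one must carefully track the interaction between the denominator $q$, the length of $P$, and the admissible range $B$ at each step, ensuring that enough iterations remain before the ambient interval becomes too small relative to the degree-$k$ polynomial. Showing that this trade-off permits $\Omega(1/\delta)$ iterations -- rather than collapsing after a bounded number of steps -- is what ultimately delivers the qualitative conclusion $\beta_k(n)/n \to 0$.
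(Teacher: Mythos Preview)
The paper does not prove Theorem~\ref{thm:Sarkozy2}; it is stated in the introduction as a known background result (with a citation to \cite{KM78}) purely to motivate the polynomial-ring and finite-field analogues, which are the paper's actual contributions. There is therefore no ``paper's own proof'' to compare your attempt against.

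For what it is worth, your outline is the standard S\'ark\"ozy-style Fourier density-increment argument and is broadly correct as a sketch. One point could be stated more cleanly: rather than speaking of a rescaled polynomial $F'$ with growing coefficients in an ad hoc way, the usual formulation uses that $F(0)=0$ forces $x\mid F(x)$ in $\Z[x]$, so $q\mid F(qb)$ for every integer $b$; hence on a progression of common difference $q$ the forbidden differences are governed by $F_q(b):=F(qb)/q$, which is again a degree-$k$ integer polynomial with constant term zero. The leading coefficient of $F_q$ picks up a factor $q^{k-1}$ at each step, and balancing this growth against the shrinking ambient interval is precisely the bookkeeping you flag; it does allow enough iterations for the qualitative conclusion $\beta_k(n)/n\to 0$. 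But none of this appears in the present paper, whose methods (the Croot--Lev--Pach polynomial technique) are entirely different and are not applied to the integer setting at all.
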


The best known quantitative bounds for $\alpha(n)$ and $\beta(n)$ improve upon the trivial bounds $\alpha(n) \leq n$ and $\beta_k(n) \leq n$ by polylogarithmic factors. Specifically, the best known bound for Theorem~\ref{thm:Sarkozy} is $\alpha(n) \le O(n/(\log n)^{c \log \log \log n})$ for some absolute constant $c>0$ due to Bloom and Maynard \cite{BM22}. Building upon work of Pintz--Steiger--Szemer\'edi \cite{PSS88}, Rice \cite{R19} obtained the bound $\beta_k(n) \le O(n/(\log n)^{c(k) \log \log \log \log n})$ for Theorem~\ref{thm:Sarkozy2} for some constant $c(k)>0$ depending on $k$.

A few years ago, Green \cite{G17}\footnote{As the assumptions in \cite[Theorem 1.2]{G17} are not stated correctly in the published version, we cite the newer and corrected arXiv version of the paper.} considered the following analogue of Theorem~\ref{thm:Sarkozy2} for polynomial rings (an analogue of Theorem~\ref{thm:Sarkozy} in this setting with much weaker quantitative bounds was shown earlier by L\^{e} and Liu \cite{LL13}).

\begin{theorem}[{\cite[Theorem 1.2]{G17}}]\label{thm:Green-S}
Let $q$ be a prime power and let $F \in \F_q[x]$ be a polynomial of degree $k$ with constant term zero. Suppose the number of roots of $F$ in $\F_q$ is coprime to $q$. Let $\gamma_{q,k}(n)$ be the maximum size of a set $A$ of polynomials with degree less than $n$ in $\F_q[x]$ such that there do not exist distinct polynomials $p_1(x), p_2(x) \in A$ with $p_1(x) - p_2(x) = F(b(x))$ for some $b(x) \in \F_q[x]$. Then there exists a constant $t_{q,k}< q$ such that $\gamma_{q,k}(n)\le 2 \cdot (t_{q,k})^n$.
\end{theorem}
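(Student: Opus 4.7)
My plan is to use Fourier analysis on the additive group $V_n := \{p \in \F_q[x] : \deg p < n\} \cong \F_q^n$, combined with Weil-type estimates for character sums over values $F(b)$.

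The setup is as follows. Let $m := \lceil n/k \rceil$ so that $F(b) \in V_n$ whenever $b \in V_m$, and define the counting function $\tilde f(s) := |\{b \in V_m : F(b) = s\}|$. The key observation is $\tilde f(0) = r$: the only $b \in V_m$ with $F(b) = 0$ are the $r$ constants $b = \alpha \in \F_q$ with $F(\alpha) = 0$, since any non-constant $b$ gives $\deg F(b) \ge k \ge 1$. Because $A$ is pattern-free, $\tilde f(a_1 - a_2) = 0$ for all distinct $a_1, a_2 \in A$, and hence
\[ \sum_{a_1, a_2 \in A} \tilde f(a_1 - a_2) = r |A|. \]
Expanding this identity via Fourier on $V_n$, the trivial character contributes the main term $q^{m-n} |A|^2$ and by Plancherel the error is at most $M \cdot |A|$ where $M := \max_{\chi \neq 1} |\hat{\tilde f}(\chi)|$. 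Rearranging gives $|A| \le (r + M) q^{n - m}$.

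The crux is to establish a Weil-type bound $M \le C_k \cdot q^{m/2}$ for a constant $C_k$ depending on $q$ and $k$. Writing $\hat{\tilde f}(\chi) = \sum_{\vec b \in \F_q^m} \psi(G_\chi(\vec b))$ for a fixed nontrivial additive character $\psi : \F_q \to \C^*$, where $G_\chi$ is a polynomial of degree at most $k$ in the coefficients $\vec b = (b_0, \ldots, b_{m-1})$ of $b$ depending on $\chi$ through the identification of characters of $V_n$ with $V_n$, Deligne's form of the Weil conjectures yields the needed square-root cancellation provided $G_\chi$ is nondegenerate in a suitable sense (e.g., its leading form defines a smooth hypersurface). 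With such a bound in hand, using $r \le k$ gives $|A| \le C_k' \cdot q^{n - n/(2k)}$, so one may take any $t_{q,k} \in (q^{1 - 1/(2k)}, q)$, absorbing the prefactor for $n$ large and using the trivial bound for small $n$.

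The main obstacle is verifying the nondegeneracy of $G_\chi$ from the hypothesis that the number of roots of $F$ in $\F_q$ is coprime to $q$. Without this hypothesis --- for instance when $F(y) = y^p - y$ --- certain nontrivial characters are trivial on the entire image $F(V_m)$, making $|\hat{\tilde f}(\chi)| = q^m$ and breaking the Fourier argument. Even under the hypothesis, extra care may be required for purely additive polynomials (e.g., $F(y) = y^p$), where $F(V_m)$ is itself a proper subspace and a direct coset argument bounding $|A|$ by $[V_n : F(V_m)]$ produces the required exponential savings. Pinning down precisely how $\gcd(r,q)=1$ guarantees sufficient nondegeneracy in the Weil sum for every nontrivial $\chi$ is the delicate heart of the proof, and this is precisely the algebraic obstruction that the present paper aims to remove.
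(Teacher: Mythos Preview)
Your approach is fundamentally different from both Green's original proof and this paper's extension: they use the Croot--Lev--Pach polynomial method, not Fourier analysis. Concretely, the argument builds a low-degree polynomial $P$ supported on $\im(\Phi)$ with $P(0)\neq 0$ (Lemma~\ref{lem:Green}, with the weight $\mu$ supplied by Lemma~\ref{lem:weight1}) and then bounds the rank of the matrix $(P(u-v))_{u,v\in A}$ via Lemma~\ref{lem:CLP}. The hypothesis $\gcd(r,q)=1$ in Green's version enters only because Green effectively takes $\mu\equiv 1$, so that $\sum_{a\in\Phi^{-1}(0)}\mu(a)=r\neq 0$ in $\F_q$; the paper's contribution is exactly to pick $\mu$ more carefully so that this step goes through without that hypothesis.

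Your Fourier route, by contrast, has a genuine gap at the key estimate, and it is more serious than the nondegeneracy issue you flag. You want
\[
M:=\max_{\chi\neq 1}\Bigl|\sum_{b\in V_m}\chi(F(b))\Bigr|\le C_k\, q^{m/2}
\]
with $C_k$ depending only on $q$ and $k$. But Deligne's bound for an $m$-variable exponential sum of degree at most $k$---even when its smoothness and characteristic hypotheses hold---is only $(k-1)^m q^{m/2}$, with an exponential-in-$m$ prefactor. Substituting this into $|A|\le (r+M)q^{n-m}$ with $m\approx n/k$ gives $t_{q,k}\approx (k-1)^{1/k} q^{1-1/(2k)}$, which is smaller than $q$ only when $q>(k-1)^2$; for smaller $q$ relative to $k$ you obtain no power saving at all. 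On top of that, you have not actually verified Deligne's hypotheses for $G_\chi$ (smooth leading form, $p\nmid \deg G_\chi$), nor explained how $\gcd(r,q)=1$ would imply them---and there is no evident reason it should. So the proposal, as written, does not establish the theorem. It is precisely this exponential-in-$m$ loss and the accompanying nondegeneracy bookkeeping that the CLP polynomial method sidesteps.
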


We observe that one trivially has $\gamma_{q,k}(n) \leq q^n$, since the total number of polynomials of degree less than $n$ in $\F_q[x]$ is $q^n$. In light of this, the shape of the Green's bound is drastically different from those mentioned earlier in the integer setting. Recall that in the integer setting the best known bounds are polylogarithmic saving over the trivial bound $n$. In contrast, in the polynomial ring setting, Theorem~\ref{thm:Green-S} gives a much better power saving bound over the trivial bound $q^n$. To obtain these strong bounds in the polynomial setting, Green utilized the Croot--Lev--Pach \cite{CLP17} polynomial method. Croot--Lev--Pach \cite{CLP17} introduced this new polynomial technique to obtain a new power saving upper bound for the size of subsets in $\Z_4^n$ without three-term arithmetic progressions. This polynomial method has found many applications, leading to several important breakthroughs such as the groundbreaking power saving upper bound on the capset problem by Ellenberg--Gijswijt \cite{EG17}. 

In Theorem~\ref{thm:Green-S}, it is natural to only consider polynomials with constant term zero. Indeed, suppose $F(T) = T^q - T +1$ and let $A$ be the set of polynomials with degree less than $n$ in $\F_q[x]$ with constant term zero. Then $|A| = q^{n-1}$ and there do not exist $p_1(x), p_2(x) \in A$ and $b(x) \in \F_q[x]$ with $p_1(x) - p_2(x) = F(b(x))$. This is because $F(b(x))$ always has constant term 1 for any $b(x) \in \F_q[x]$ while $p_1(x) - p_2(x)$ has constant term 0 for all $p_1(x), p_2(x) \in A$. Now, $|A|=q^{n-1}$ is a constant fraction of the total number of polynomials in $\F_q[x]$ with degree less than $n$. So we cannot hope for a good bound on $|A|$ in Theorem~\ref{thm:Green-S} without the assumption that $F$ has constant term zero. Similarly in Theorem~\ref{thm:Sarkozy2}, the constant term zero condition cannot be removed.

However, the condition in Theorem~\ref{thm:Green-S} on the number of roots of $F$ being coprime to $q$ is not as natural, and is an artefact of Green's proof. In this paper, we strengthen Theorem~\ref{thm:Green-S} by showing that the condition on the number of roots of $F$ is unnecessary.


\begin{theorem}\label{thm:S}
Let $q$ be a prime power and let $F \in \F_q[x]$ be a polynomial of degree $k$ with constant term zero.  Let $\gamma_{q,k}(n)$ be the maximum size of a set $A$ of polynomials with degree less than $n$ in $\F_q[x]$ such that there do not exist distinct polynomials $p_1(x),p_2(x) \in A$ with $p_1(x) - p_2(x) = F(b(x))$ for some $b(x) \in \F_q[x]$. Then there exist constants $0<t_{q,k} < q$ and $c_{q,k} > 0$ such that $\gamma_{q,k}(n) \le c_{q,k} \cdot (t_{q,k})^n$ holds for all $n$.
\end{theorem}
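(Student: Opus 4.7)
The plan is to extend the Croot--Lev--Pach polynomial method that Green used for Theorem~\ref{thm:Green-S}. Identifying $\F_q[x]_{<n}$ with $\F_q^n$ via coefficient vectors, we view $A$ as a subset of $\F_q^n$ and translate the hypothesis to $(A - A) \cap (S \setminus \{0\}) = \emptyset$, where $S = \{F(b(x)) : b(x) \in \F_q[x]\} \cap \F_q[x]_{<n}$ is the image of $F$. The standard Croot--Lev--Pach rank argument then reduces the task to producing a polynomial $Q \in \F_q[y_0, \ldots, y_{n-1}]$ of total degree at most $\alpha n$ for some $\alpha < 1$ depending only on $q$ and $k$, such that $Q$ vanishes on $S \setminus \{0\}$ while $Q(0) \neq 0$; the resulting rank bound for the matrix $(p_1, p_2) \mapsto Q(p_1 - p_2)$ restricted to $A \times A$ would then give $\gamma_{q,k}(n) \le c_{q,k} (t_{q,k})^n$ with $t_{q,k} < q$.

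I would first trace through Green's proof to isolate the step where the hypothesis $\gcd(|R|, q) = 1$ is used, with $R = \{\alpha \in \F_q : F(\alpha) = 0\}$. In such polynomial-method arguments, this scalar typically appears as the value of a sum indexed by the fiber $\{b(x) \in \F_q[x] : F(b(x)) = 0\} = R$ (the roots of $F$ in $\F_q$ viewed as constant polynomials); nonvanishing modulo $\operatorname{char}(\F_q)$ is needed so that Green's test polynomial does not degenerate. To remove the hypothesis, I would decompose $S$ into $S_0 = \{0\}$ (the image of the constant roots of $F$) and $S_1 = S \setminus \{0\}$ (elements coming from $b(x)$ with $F(b(x)) \neq 0$). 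Since $Q$ only needs to vanish on $S_1$ and be nonzero at the single point $0$, the cardinality $|R|$ need not enter any vanishing sum---it is absorbed into the freedom of choosing $Q(0)$.

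The main technical obstacle is then to construct $Q$ vanishing on $S_1$ with total degree at most $\alpha n$. My plan is to modify Green's algebraic identity so that it only concerns elements $F(b(x)) \neq 0$: for such $b(x)$ there is a nontrivial polynomial relationship between the coefficients of $F(b(x))$ and those of $b(x)$ that avoids inverting $|R|$. The image $S_1$ is (essentially) the image of a polynomial map $\F_q^m \to \F_q^n$ with $m \le n/k$, so by a dimension count there exist nonzero polynomials of degree roughly $O(n/k)$ vanishing on $S_1$; the real work is to make such a construction explicit enough to bound the degree tightly, to ensure $Q(0) \neq 0$, and to handle ``near-zero'' elements of $S_1$ (those of low degree in $x$) where degeneracy is a concern. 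A fallback plan, should the direct algebraic construction prove elusive, is to induct on $n$: fix a suitable high-degree coefficient of $p(x) \in A$, reduce to Green's theorem applied on a lower-dimensional affine slice (where, after an appropriate substitution, the coprimality hypothesis can be arranged to hold), and combine the per-slice bounds.
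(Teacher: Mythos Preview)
Your setup is correct and matches the paper: identify $\F_q[x]_{<n}$ with $\F_q^n$, encode $b(x)\mapsto F(b(x))$ as a polynomial map $\Phi:\F_q^m\to\F_q^n$ with $m\approx n/k$, and aim for a polynomial $P$ with $P(0)\neq 0$, $\supp(P)\subset\im(\Phi)$, and $\deg P<(q-1)n$, so that the Croot--Lev--Pach rank lemma bounds $|A|$. You have also correctly located where Green's argument uses the coprimality hypothesis: his test polynomial
\[
P(x)=\sum_{a\in\F_q^m}\prod_{i=1}^n\bigl(1-(x_i-\phi_i(a))^{q-1}\bigr)
\]
satisfies $P(0)=|\Phi^{-1}(0)|=|R|$ in $\F_q$, so one needs $p\nmid |R|$.

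The gap is in your proposed remedy. You suggest ``modifying Green's identity so that it only concerns elements $F(b(x))\neq 0$'', i.e.\ restricting the sum to $a$ with $\Phi(a)\neq 0$. But the entire degree saving in Green's argument comes from the fact that the sum runs over \emph{all} of $\F_q^m$: it is precisely the identity $\sum_{a_s\in\F_q}a_s^{i_s}=0$ for $i_s<q-1$ (applied coordinatewise) that kills the high-degree monomials in the $a$-variables and forces $\deg P\le (q-1)(n-m/d)$. If you excise the finitely many $a\in\Phi^{-1}(0)$ from the sum, or otherwise break the product structure over $\F_q^m$, this cancellation is lost and you have no control on $\deg P$. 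Your dimension-count fallback likewise does not give a polynomial of the required degree with $P(0)\neq 0$.

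The paper's fix is much lighter and keeps the full sum intact: insert a \emph{weight} $\mu(a)$ into the summand, taking
\[
P(x)=\sum_{a\in\F_q^m}\mu(a)\prod_{i=1}^n\bigl(1-(x_i-\phi_i(a))^{q-1}\bigr),
\]
so that $P(0)=\sum_{a\in\Phi^{-1}(0)}\mu(a)$. Since $|\Phi^{-1}(0)|\le k$, one can take $\mu$ to be a product of $|\Phi^{-1}(0)|-1\le k-1$ affine forms vanishing on all but one point of $\Phi^{-1}(0)$, guaranteeing $P(0)\neq 0$. The sum is still over all of $\F_q^m$, so the power-sum cancellation goes through unchanged; the only cost is that $\deg\mu\le k-1$ adds at most $(k-1)/d$ to the degree bound for $P$, a constant independent of $n$. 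This is the missing idea you should supply in place of the ``restrict to $S_1$'' plan.
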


Our proof gives the following value for $t_{q,k}$:
\begin{equation}\label{eq:t-exp}
 t_{q,k} = \inf\limits_{0 < x < 1} \frac{1+x+ \cdots + x^{q-1}}{x^{\frac{1}{2}(q-1)(1- 1/(kd))}},
\end{equation}
where $d=\min \{k, (q-1)(1 + \log_q k)\}$. It is not hard to show that this value $t_{q,k}$ satisfies $t_{q,k} < q$. Indeed, when $x = 1$, the expression on the right hand side evaluates to $q$. Furthermore, the derivative of the expression is positive at $x=1$. Consequently, the infimum of this expression over $0 < x < 1$ is strictly less than $q$, meaning that $t_{q,k} < q$. We remark that optimizing the bounds in Green's proof in \cite{G17} gives the same value of $t_{q,k}$ in Theorem~\ref{thm:Green-S} as in (\ref{eq:t-exp}).

An explicit example of a polynomial $F$ to which Theorem~\ref{thm:S} but not Theorem~\ref{thm:Green-S} applies is the following: Let $q = p^n$ for a prime $p$ and consider the polynomial $F(x) = x^{k-p+1}(x-1)\ldots (x-(p-1))$ for any $k\ge p$. Then $F$ has exactly $p$ roots in $\mathbb{F}_q$ and so it does not satisfy the  condition in Theorem~\ref{thm:Green-S} on the number of roots of $F$ being coprime to $q$.

Another setting similar to $\F_q[x]$ in which one may consider a S\'ark\"ozy-style problem is $\F_{q}$, for a prime power $q = p^n$. We obtain the following result in the setting of $\F_q$, as an application of Theorem~\ref{thm:S}. 

\begin{corollary}\label{cor:Fq}
Let $p$ be a prime, $q = p^n$ be a prime power and $F \in \F_{p}[x]$ be a polynomial of degree $k$ with constant term zero. Let $\eta_{p,k}(n)$ be the size of the maximum subset $A \subset \F_q$ that does not contain distinct $a_1,a_2 \in A$ such that $a_1 - a_2 = F(b)$ for some $b \in \F_q$. Then there exist constants $0<t_{p,k} < p$ and $c_{p,k} > 0$ such that $ \eta_{p,k}(n) \le c_{p,k}\cdot (t_{p,k})^n$ holds for all $n$.
\end{corollary}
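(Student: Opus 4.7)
The plan is to reduce Corollary~\ref{cor:Fq} to Theorem~\ref{thm:S} via the standard identification $\F_q \cong \F_p[x]/(m(x))$, where $m(x) \in \F_p[x]$ is an irreducible polynomial of degree $n$. Under this identification, each element of $\F_q$ has a unique representative in $\F_p[x]$ of degree less than $n$. Given a subset $A \subset \F_q$ avoiding distinct $a_1, a_2$ with $a_1 - a_2 = F(b)$ for some $b \in \F_q$, I would lift $A$ to the set $A' \subset \F_p[x]$ of its representative polynomials of degree less than $n$, noting that $|A'| = |A|$.

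The key verification is that $A'$ inherits the analogous avoiding property in $\F_p[x]$. Suppose for contradiction that there exist distinct $p_1, p_2 \in A'$ and $b(x) \in \F_p[x]$ with $p_1(x) - p_2(x) = F(b(x))$. Since $F \in \F_p[x]$ and the quotient map $\pi \colon \F_p[x] \to \F_q$ is a ring homomorphism fixing $\F_p$, reducing modulo $m(x)$ yields $a_1 - a_2 = F(b)$ in $\F_q$ for $a_i = \pi(p_i)$ and $b = \pi(b(x))$. Moreover, $p_1 - p_2$ is a nonzero polynomial of degree less than $n$, so it is not divisible by $m(x)$; hence $a_1 \neq a_2$, contradicting the avoiding property of $A$.

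Having established this, I would apply Theorem~\ref{thm:S} with prime power $p$ and polynomial $F \in \F_p[x]$ to the set $A' \subset \F_p[x]$ of polynomials of degree less than $n$. This yields $|A'| \le c_{p,k} \cdot (t_{p,k})^n$ for constants $0 < t_{p,k} < p$ and $c_{p,k} > 0$ depending only on $p$ and $k$, so $\eta_{p,k}(n) = |A| = |A'| \le c_{p,k} \cdot (t_{p,k})^n$, as required. There is no serious obstacle here: the heavy lifting is entirely done by Theorem~\ref{thm:S}, and the only subtlety in the reduction is ensuring that distinctness of $p_1$ and $p_2$ in $\F_p[x]$ survives reduction modulo $m(x)$, which is immediate from the degree bound.
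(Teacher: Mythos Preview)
Your proposal is correct and follows essentially the same approach as the paper: both reduce to Theorem~\ref{thm:S} by identifying $\F_q$ with the polynomials of degree less than $n$ in $\F_p[x]$ via an irreducible polynomial of degree $n$. If anything, you are more explicit than the paper in verifying that the avoiding property transfers to $A'$ and that distinctness is preserved under the lift.
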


Again, the value of $t_{p,k}$ is as given by (\ref{eq:t-exp}). In this direction, Peluse \cite{P18, P19} studied polynomial patterns in $\F_q$ for $q$ of sufficiently large characteristic depending on the polynomial pattern. Using Fourier analytic techniques, Peluse \cite{P18} proved a power-saving bound on sets $A \subset \F_q$ not containing polynomial progressions $a, a+P_1(b), \ldots, a + P_r(b)$ for some $a,b \in \F_q$ with $b\neq 0$, for given linearly independent polynomials $P_1(x), \ldots, P_r(x)$ in $\Z[x]$ with constant term zero, assuming that $q$ has large characteristic. The setting in Corollary~\ref{cor:Fq} corresponds to the $r=1$ setting of Peluse's result. By applying the Croot--Lev--Pach polynomial method, we prove a power-saving bound in Corollary~\ref{cor:Fq} in a complementary regime to Peluse's result: while Peluse's theorem holds for $\F_q$ (where $q = p^n$) with sufficiently large characteristic $p$, Corollary~\ref{cor:Fq} holds for $\F_q$ (where $q = p^n$) with any fixed characteristic $p$ but with $n$ large.

\medskip
\textit{Organization}. In Section 2, we collection some preliminary tools in order to apply the Croot--Lev--Pach polynomial method. We prove Theorem~\ref{thm:S} in Section 3 and Corollary~\ref{cor:Fq} in Section 4. 

\section{Preliminary tools}

In this section, we collect some useful results to be applied later. 

\begin{lemma}\label{lem:weight1}
Let $S \subset \F_q^m$ with $|S| \geq 1$. Then there exists a polynomial $\mu\in \F_q[x_1, \ldots, x_m]$ of degree at most $ \abs{S} -1$ such that $\sum_{a \in S} \mu(a) \neq 0$.
\end{lemma}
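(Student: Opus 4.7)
The plan is to construct the polynomial $\mu$ explicitly via a product of linear factors, in the spirit of a Lagrange-type interpolation argument. The idea is to pick a distinguished element $a_0 \in S$ and build a polynomial of degree $|S|-1$ that vanishes on $S \setminus \{a_0\}$ but is nonzero at $a_0$; then the sum reduces to a single nonzero summand.

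Concretely, fix any $a_0 \in S$. For each $a \in S \setminus \{a_0\}$, the fact that $a \neq a_0$ gives us at least one coordinate $i_a \in \{1, \ldots, m\}$ such that the $i_a$-th coordinates $a_{i_a}$ and $(a_0)_{i_a}$ differ. I would then set
\[
\mu(x_1, \ldots, x_m) \;=\; \prod_{a \in S \setminus \{a_0\}} \bigl(x_{i_a} - a_{i_a}\bigr).
\]
This is a product of $|S|-1$ linear factors, so $\deg \mu \le |S|-1$ as required (and if $|S|=1$, the product is empty and $\mu = 1$ works trivially).

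Next I would verify the two evaluation properties. For any $a' \in S \setminus \{a_0\}$, the factor corresponding to $a = a'$ evaluates to $a'_{i_{a'}} - a'_{i_{a'}} = 0$, so $\mu(a') = 0$. On the other hand, every factor in $\mu(a_0)$ equals $(a_0)_{i_a} - a_{i_a}$, which is nonzero by the choice of $i_a$; since $\F_q$ is an integral domain, the product is also nonzero. Therefore
\[
\sum_{a \in S} \mu(a) \;=\; \mu(a_0) \;\neq\; 0,
\]
completing the proof.

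There is no real obstacle here, since the lemma is a straightforward warm-up; the only thing to keep track of is the degree count, which matches because we multiply exactly $|S|-1$ linear factors, one for each element of $S$ distinct from $a_0$.
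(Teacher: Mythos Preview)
Your proof is correct and essentially the same as the paper's: both pick a distinguished point and take $\mu$ to be a product of $|S|-1$ linear forms, one for each remaining point, vanishing there but not at the distinguished point. The only cosmetic difference is that you choose the specific coordinate hyperplane $x_{i_a} = a_{i_a}$, whereas the paper phrases it as any hyperplane through $v^{(j)}$ avoiding $v^{(1)}$.
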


\begin{proof}
We explicitly construct such a polynomial $\mu$. Suppose that the elements of $S$ are $v^{(1)}, \ldots, v^{(\abs{S})} \in \F_q^m$. For $j = 2, \ldots, |S|$, let $\ell_j \in \F_q[x_1, \ldots, x_m]$ be a linear polynomial corresponding to the equation of a hyperplane passing through $v^{(j)}$ but not passing through $v^{(1)}$. Consider $\mu(x_1, \ldots, x_m) := \prod_{j=2}^{\abs{S}} \ell_j$. By construction, $\deg \mu = \abs{S} - 1$, $\mu(v^{(1)}) \neq 0$ and $\mu( v^{(j)}) = 0$ for all $2 \leq j \leq \abs{S}$. Hence, it follows that $\sum_{a \in S}\mu(a) = \sum_{j=1}^{|S|} \mu(v^{(j)}) \neq 0$.
\end{proof}

In the following lemma, given a polynomial $\mu$, we construct a suitable polynomial to facilitate applying the Croot--Lev--Pach polynomial method \cite{CLP17} in our setting. For a polynomial $P(x) \in \F_q[x_1, \ldots, x_n]$, we define its support to be $\supp(P) = \{ x \in \F_q^n: P(x) \neq 0 \}$. 

\begin{lemma}\label{lem:Poly}
Let $q$ be a prime power and $d$ be a positive integer. Consider a polynomial map $\Phi:= (\phi_1, \ldots, \phi_n): \F_q^m \to \F_q^n$ where $\phi_i \in \F_q[x_1, \ldots x_m]$ and $\deg \phi_i \leq d$ for all $1 \leq i \leq n$. Assume further that there exists some $\mu \in \F_q[x_1, \ldots, x_m]$ satisfying $\sum_{a \in \Phi^{-1}(0)} \mu(a) \neq 0$. Then there exists a polynomial $P \in \F_q[x_1, \ldots, x_n]$ of degree at most $(q-1)(n - m/d) + (\deg \mu)/d$ such that $P(0) \neq 0$ and $\supp(P) \subset \im(\Phi)$.
\end{lemma}

In the proof of this lemma, we construct such a polynomial $P$ explicitly. To bound its degree, we utilize the following observation on vanishing power sums. 

\begin{obs}\label{obs:sum}
Let $q$ be a prime power and $0 \leq k < q-1$ be an integer. Then $\sum_{x \in \F_q} x^k = 0$.
\end{obs}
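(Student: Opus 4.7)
The plan is to handle the two cases $k=0$ and $1 \le k \le q-2$ separately. For $k=0$, using the usual convention $0^0 = 1$, each term in $\sum_{x \in \F_q} x^0$ equals $1$, so the sum equals $q \cdot 1$, which is $0$ in $\F_q$ because $q$ is a power of the characteristic of $\F_q$. This disposes of the case $k = 0$.

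For $1 \le k \le q-2$, I would first observe that the $x = 0$ term contributes nothing, so it suffices to show that $S := \sum_{x \in \F_q^*} x^k = 0$. The key fact I would use is that $\F_q^*$ is cyclic of order $q-1$: fix a generator $g$, so that multiplication by $g$ is a bijection of $\F_q^*$. Reindexing the sum via this bijection gives
\[
S = \sum_{x \in \F_q^*} x^k = \sum_{x \in \F_q^*} (gx)^k = g^k \sum_{x \in \F_q^*} x^k = g^k S,
\]
hence $(1 - g^k)S = 0$. Since $g$ has order exactly $q-1$ and $1 \le k \le q-2$, we have $g^k \ne 1$, so $1 - g^k$ is a nonzero element of the field $\F_q$ and is therefore invertible. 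Dividing out yields $S = 0$, completing the proof.

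There is no real obstacle here; the only minor subtleties are the convention $0^0 = 1$ used in the $k=0$ case and the need to check $g^k \neq 1$ (which is immediate from $k < q-1 = \operatorname{ord}(g)$). An equivalent, alternative finish for the $k\ge 1$ case would be to evaluate the geometric series $\sum_{i=0}^{q-2} g^{ik} = \tfrac{g^{(q-1)k}-1}{g^k - 1} = 0$, but the bijection-and-cancellation argument above avoids even that small computation.
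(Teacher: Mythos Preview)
Your proof is correct and follows essentially the same approach as the paper: both split into the cases $k=0$ and $1 \le k \le q-2$, and both exploit the cyclicity of $\F_q^\times$ via a generator. The only cosmetic difference is that the paper evaluates the geometric series $\sum_{i=0}^{q-2} \xi^{ik}$ directly (the alternative you mention at the end), whereas you use the reindexing argument $(1-g^k)S=0$; these are equivalent presentations of the same idea.
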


Here, we use the usual convention that $0^0=1$.

\begin{proof}
For $k=0$, note that $\sum_{x \in \F_q} x^k = \sum_{x \in \F_q} 1=0$. For $1\le k<q-1$, recall that $\F_q^{\times}$ is cyclic, and let $\xi$ be a generator of $\F_q^\times$. Then we have the geometric series
\[ \sum_{x \in \F_q} x^k = \sum_{i=0}^{q-2} \xi^{ki} = \frac{1 - \xi^{k(q-1)}}{1 - \xi^k} = 0.\qedhere\]
\end{proof}

\begin{proof}[Proof of Lemma~\ref{lem:Poly}]
Construct the polynomial $P \in \F_q[x_1, \ldots, x_n]$ as follows:

\begin{equation}\label{eq:poly}
    P(x_1, \ldots, x_n) = \sum_{a \in \F_q^m} \mu(a) \prod_{i=1}^{n} \left( 1 - (x_i - \phi_i(a))^{q-1} \right).
\end{equation}

We claim that $P(b) = \sum_{a \in \Phi^{-1}(b)} \mu(a)$ for every $b \in \F_q^n$. This is because if $a \in \Phi^{-1}(b)$, then $b_i = \phi_i(a)$ for all $ 1 \leq i \leq n$ and so $\prod_{i=1}^{n}( 1 - (b_i - \phi_i(a))^{q-1} ) = 1$. Conversely, if $a \not \in \Phi^{-1}(b)$ then there is some index $j$ such that $b_j \neq \phi_j(a)$ and so $1 - (b_j - \phi_j(a))^{q-1} = 0$. In particular, $\prod_{i=1}^{n}( 1 - (b_i - \phi_i(a))^{q-1} ) = 0$ since the $j$th term vanishes. Consequently, it follows that $P(b) = \sum_{a \in \Phi^{-1}(b)} \mu(a)$ for all $a \in \F_q^n$.

It follows by the condition for $\mu$ that we have $P(0) = \sum_{a \in \Phi^{-1}(0)} \mu(a) \neq 0$. Furthermore, since $\Phi^{-1}(b) = \emptyset$ for $b \not \in \mathrm{Im}(\Phi)$, we have $P(b) = 0$ for all $b \not \in \mathrm{Im}(\Phi)$. 

It remains to check that $\deg P\le (q-1)(n - m/d) + (\deg \mu)/d$.

Let us consider each of the terms
\[Q(a_1, \ldots, a_m,x_1, \ldots, x_n) := \mu(a_1, \ldots, a_m)\prod_{i=1}^{n} (1 - (x_i - \phi_i(a_1, \ldots, a_m))^{q-1}) \]
in (\ref{eq:poly}) as a polynomial in $\F_q[a_1, \ldots, a_m, x_1, \ldots, x_n]$. Note that $P(x_1, \ldots, x_n) = \sum_{a \in \F_q^m}  Q(a_1, \ldots, a_m,x_1, \ldots, x_n)$. Furthermore, we introduce a nonstandard weighting of the polynomial ring $\F_q[a_1, \ldots, a_m, x_1, \ldots, x_n]$, which we denote by $\deg^*$. While we continue to view each $x_i$ as having degree $\deg^*(x_i)=1$, we view each $a_i$ as having degree $\deg^*(a_i)=1/d$. Then we for example have $\deg^*(x_i^{r} a_i^{s}) = r+ s/d$. Note that, under this new weighting, we have $\deg^*(x_i - \phi_i(a_1,\ldots,a_m)) \leq 1$, since $\deg \phi_i \leq d$. Consequently, it follows that 
\begin{equation}\label{eq:degQ}
    \deg^* Q(a_1, \ldots, a_m, x_1, \ldots, x_n) \leq (\deg \mu)/d + (q-1)n.
\end{equation}

Each monomial in $Q(a_1, \ldots, a_m, x_1, \ldots, x_n)$ is of the form $a_1^{i_1} \cdots a_m^{i_m}x_1^{j_1} \cdots x_n^{j_n}$ with non-negative integers $i_1, \ldots, i_m, j_1,\ldots,j_m$. We claim that in the sum $P(x_1, \ldots, x_n) = \sum_{a \in \F_q^m}  Q(a_1, \ldots, a_m,x_1, \ldots, x_n)$, the contributions from the monomials $a_1^{i_1} \cdots a_m^{i_m}x_1^{j_1} \cdots x_n^{j_n}$ with $i_1 + \cdots + i_m < (q-1)m$ vanish. Indeed, if $i_1 + \cdots + i_m < (q-1)m$, there exists some index $s$ with $0\le i_s < q-1$. By Observation~\ref{obs:sum} we have $\sum_{a_{s} \in \F_q} a_{s}^{i_s} = 0$, and therefore
\[ \sum_{a \in \F_q^m} a_1^{i_1} \ldots a_m^{i_m}x_1^{j_1} \cdots x_n^{j_n}  = \left(\sum_{a_{s} \in \F_q} a_{s}^{i_s} \right) \left( \sum_{(a_1, \ldots, a_{s-1}, a_{s+1}, \ldots, a_m)\in \F_q^{m-1}} a_1^{i_1} \ldots a_{s-1}^{i_{s-1}} a_{s+1}^{i_{s+1}} \ldots a_m^{i_m} x_1^{j_1} \cdots x_n^{j_n}\right) = 0. \]
Thus, all monomials $a_1^{i_1} \cdots a_m^{i_m}x_1^{j_1} \cdots x_n^{j_n}$ with $i_1 + \cdots + i_m < (q-1)m$ cancel when we take the summation over all $a\in \F_q^m$. This means that all monomials in $P(x_1, \ldots, x_n) = \sum_{a \in \F_q^m}  Q(a_1, \ldots, a_m,x_1, \ldots, x_n)$ are obtained from monomials $a_1^{i_1} \cdots a_m^{i_m}x_1^{j_1} \cdots x_n^{j_n}$ in $Q(a_1, \ldots, a_m, x_1, \ldots, x_n)$ such that $i_1+\cdots + i_m \geq (q-1)m$. By (\ref{eq:degQ}), for these monomials we have
\[j_1 + \cdots +j_n \leq \deg^*(a_1^{i_1} \cdots a_m^{i_m}x_1^{j_1} \cdots x_n^{j_n})- (q-1)m/d\le  (\deg \mu)/d + (q-1)n - (q-1)m/d.\]
Thus, all monomials with non-zero coefficients in $P(x_1,\dots,x_n)$ are of the form $x_1^{j_1} \cdots x_n^{j_n}$ with $j_1 + \cdots +j_n\le (\deg \mu)/d + (q-1)n - (q-1)m/d$. This shows that
\[\deg P \leq  (\deg \mu)/d +(q-1)n - (q-1)m/d = (q-1)(n-m/d) + (\deg \mu)/d,\]
as desired.
\end{proof}

Lastly, we recall the key lemma in the Croot--Lev--Pach \cite{CLP17} polynomial method.

\begin{lemma}\label{lem:CLP}
Let $P \in \F_q[x_1, \ldots, x_n]$ and let $M$ be the $q^n \times q^n$ matrix with rows and columns indexed by elements $\F_q^n$, where for all $(u,v)\in \F_q^n\times\F_q^n$ the $(u,v)$ entry is $M_{uv} = P(u -v)$. Then 
\[ \rank(M) \leq 2 \left| \left \{ (\alpha_1,\ldots, \alpha_n)\in \{0, 1, \ldots, q-1 \}^n:  \alpha_1 + \ldots + \alpha_n \leq \deg P/2 \right \}\right|. \]
\end{lemma}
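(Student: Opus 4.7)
The plan is the standard Croot--Lev--Pach argument: expand $P(u-v)$ as a polynomial in the $2n$ variables $u_1,\dots,u_n,v_1,\dots,v_n$ of total degree at most $\deg P$, reduce exponents using the identity $x^q=x$ on $\F_q$, and then split the expansion according to whether the $u$-degree or the $v$-degree of each monomial is at most $(\deg P)/2$, so that $M$ decomposes as a sum of few rank-$1$ matrices.

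First, I would write $P(u-v)=\sum_\beta c_\beta \prod_i (u_i-v_i)^{\beta_i}$ and expand each factor by the binomial theorem. This produces a polynomial in $u,v$ every one of whose monomials $u^\alpha v^\gamma$ satisfies $|\alpha|+|\gamma|\le \deg P$. Since we only care about evaluations at $(u,v)\in \F_q^n\times\F_q^n$, I can then apply the identity $x^q=x$ coordinatewise to replace any $u_i^{\alpha_i}$ or $v_i^{\gamma_i}$ whose exponent is at least $q$. Because $x^{q+k}=x^{k+1}$ for $k\ge 0$, this reduction only decreases exponents and hence cannot raise total degree; I obtain an expansion
\[ P(u-v)=\sum_{\alpha,\gamma\,\in\,\{0,1,\dots,q-1\}^n} e_{\alpha,\gamma}\, u^\alpha v^\gamma, \]
valid for all $u,v\in \F_q^n$, in which every nonzero term still satisfies $|\alpha|+|\gamma|\le \deg P$.

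For each such $(\alpha,\gamma)$, at least one of $|\alpha|\le (\deg P)/2$ or $|\gamma|\le (\deg P)/2$ must hold. Setting $T=\{\alpha\in\{0,\dots,q-1\}^n:|\alpha|\le (\deg P)/2\}$, I can therefore group terms with $|\alpha|\le(\deg P)/2$ on one side and the remaining terms (for which $|\gamma|<(\deg P)/2$) on the other, and write
\[ P(u-v) \;=\; \sum_{\alpha\in T} u^\alpha A_\alpha(v) \;+\; \sum_{\gamma\in T} B_\gamma(u)\, v^\gamma \]
for suitable polynomials $A_\alpha,B_\gamma\in\F_q[x_1,\dots,x_n]$. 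Viewed as a function on $\F_q^n\times\F_q^n$, each summand on the right is an outer product of a function of $u$ with a function of $v$ and hence contributes a rank-$1$ matrix to $M$. Summing, $M$ is a sum of at most $2|T|$ rank-$1$ matrices, giving the desired bound $\rank(M)\le 2|T|$. The only real subtlety is verifying that the coordinatewise reduction $x^q\mapsto x$ never raises the degree of a monomial, which is immediate from $u_i^{q+k}=u_i^{k+1}$; the rest is pure bookkeeping, and I do not anticipate any serious obstacle beyond this.
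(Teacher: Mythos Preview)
Your proof is correct and follows essentially the same Croot--Lev--Pach strategy as the paper: expand $P(u-v)$ into monomials in $u$ and $v$, split according to whether the $u$-part or the $v$-part has degree at most $(\deg P)/2$, and read off a decomposition of $M$ into at most $2|T|$ rank-one matrices. The one point where you are more careful than the paper is the explicit reduction of exponents via $x^q=x$, which guarantees that the relevant monomials really are indexed by tuples in $\{0,\dots,q-1\}^n$; the paper's proof asserts ``the number of such monomials $h$ is $T$'' without spelling out this reduction, so your version is in fact slightly more complete.
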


We provide a proof of this lemma for the sake of completeness. 

\begin{proof}
Let the number of $(\alpha_1,\ldots, \alpha_n) \in \{0, \ldots, q-1 \}^n$ such that $\alpha_1 + \ldots + \alpha_n \leq \deg P/2$ be $T$. First, we claim that it suffices to prove that $M_{uv} = P(u - v) = \sum_{k=1}^{2T} f_k(u) g_k(v)$ for some $f_k, g_k \in \F_q[x_1, \ldots, x_n]$. Indeed, this implies the statement in the lemma because then we can write $M = \sum_{k=1}^{2T} M_k$ where each $M_k$ is a rank 1 matrix. 

Now, let us construct such polynomials $f_k, g_k$ for $k=1,\dots,2T$. Note that each monomial in $P(u-v)$ is of the form $u_1^{a_1} \ldots u_n^{a_n} v_1^{b_1} \ldots v_n^{b_n}$ with $a_1 + \cdots + a_n \leq (\deg P)/2 $ or $b_1 + \cdots + b_n \leq (\deg P)/2$. Consequently, by grouping together monomials with the same factor of degree at most $(\deg P)/2$ in either $u$ or $v$, we may write \[ P(u-v) = \sum_{h}h(u)Q_h(v) + \sum_{h}R_{h}(u)h(v),\]
where the sums are over all monomials $h$ with degree $\deg h \leq (\deg P)/2$ and $Q_h, R_h \in \F_q[x_1, \ldots, x_n]$ are polynomials indexed by the monomials $h$. The number of such monomials $h$ is $T$, so this gives the desired expression of the form $P(u - v) = \sum_{k=1}^{2T} f_k(u) g_k(v)$.
\end{proof}

\section{S\'ark\"ozy's theorem in \texorpdfstring{$\mathbb{F}_q[x]$}{Fq[x]}}

In this section, we prove Theorem~\ref{thm:S}, which strengthens Theorem~\ref{thm:Green-S} due to Green \cite{G17}. In his proof, Green encodes the polynomial $F$ via a map $\Phi \colon \F_q^m \to \F_q^n$ in such a way that the image of $\Phi$ corresponds to the set of polynomials of the form $F(b(x))$ for some $b(x)\in \F_q[x]$ of degree at most $m$, and such that $|\Phi^{-1}(0)|$ is the number of roots of $F$ in $\F_q$. Green's proof proceeds by constructing a polynomial $P\in \F_q[x_1,\dots,x_n]$ of relatively low degree such that $P(x) = |\Phi^{-1}(x)|$ for all $x\in \F_q^n$, and hence $\supp(P) \subset \im(\Phi)$. One can then apply Lemma~\ref{lem:CLP} to the polynomial $P$, obtaining an upper bound for the rank of the $q^n \times q^n$ matrix $M$ indexed by elements of $\F_q^n$, where the $(u,v)$ entry is given by $P(u-v)$. A set $A$ as in Theorem~\ref{thm:Green-S} gives rise to a subset $A\subseteq \F_q^n$ satisfying $(A-A) \cap \mathrm{im}(\Phi) = \{ 0 \}$. For such a subset $A\subseteq \F_q^n$, one can show that the $|A|\times |A|$ submatrix of $M$ indexed by the elements of $A$ is diagonal (i.e. all off-diagonal entries in this submatrix are zero). The diagonal entries of this submatrix are all equal to $P(0)=|\Phi^{-1}(0)|$. Thus, if $|\Phi^{-1}(0)|$ is non-zero in $\F_q$ (i.e.\ if the number of roots of $F$ in $\F_q$ is coprime to $q$), this $|A|\times |A|$ submatrix has full rank $|A|$, and so $|A|$ is at most the rank of the matrix $M$ (which together with Lemma~\ref{lem:CLP} gives the desired bound for $|A|$). However, if $|\Phi^{-1}(0)|$ is zero in $\F_q$, this $|A|\times |A|$ submatrix is all-zero and we cannot make any useful conclusions from the bound for the rank of $M$. This is why the assumption on the number of roots of $F$ in $\F_q$ being coprime to $q$ is needed in Green's proof.

In our proof, we retain the same construction of $\Phi\colon \F_q^m \to \F_q^n$. However, we replace Green's construction of $P$ with a different construction of a polynomial $P$ with $\supp(P) \subset \im(\Phi)$, still of relatively low degree, ensuring that $P(0)$ is non-zero in $\F_q$ without making Green's assumption that the number of roots of $F$ in $\F_q$ is coprime to $q$. The construction of our polynomial $P$ relies on Lemmas~\ref{lem:weight1} and \ref{lem:Poly} proved in the previous section.

\begin{proof}[Proof of Theorem~\ref{thm:S}]
We begin with a similar reduction as in \cite{G17}. Let $P_{q,n}$ denote the set of all polynomials in $\F_q[T]$ with degree less than $n$. Let $m = \lfloor (n-1)/k \rfloor + 1 \geq n/k$. We identify $P_{q,n}$ with $\F_q^n$ by mapping a polynomial to an $n$-tuple representing its coefficients. Specifically, we map $c_0 + c_1x + \cdots + c_{n-1}x^{n-1}$ to $(c_0, c_1, \ldots, c_{n-1})$. Under this identification, we encode $h(x) \mapsto F(h(x))$ as a polynomial map $\Phi: \F_q^{m} \to \F_q^{n}$. More specifically, $\Phi \colon \F_q^m \to \F_q^n$ is given by
\[ \Phi(c_0, \ldots, c_{m-1}) = (\phi_0(c_0, \ldots, c_{m-1}), \ldots, \phi_{n-1}(c_0, \ldots, c_{m-1})), \]
where the polynomials $\phi_i\in \F_q[x_1, \ldots, x_m]$ are specified by $F(c_0 + c_1x + \cdots + c_{m-1}x^{m-1}) = \phi_0(c_0, \ldots, c_{m-1}) + \phi_1(c_0, \ldots, c_{m-1}) x+\cdots + \phi_{n-1}(c_0, \ldots, c_{m-1})x^{n-1}$. Here, we note that by our assumption that $\deg F = k$, we have $\deg(F(c_0 + c_1x + \cdots +c_{m-1}x^{m-1})) \leq k(m-1)=k\cdot \lfloor (n-1)/k\rfloor \leq n-1$. 

We observe some properties of $\Phi$. First, note that $\Phi^{-1}(0)$ corresponds to the roots of $F$ in $\F_q$. Since $F$ has constant term zero, it follows that $0 \in \Phi^{-1}(0)$. Furthermore, because $\deg F \leq k$, it has at most $k$ roots in $\F_q$, and so $|\Phi^{-1}(0)|\leq k$. 

Next, we claim that $\deg \phi_i \leq \min\{ k, (q-1)(1+ \log_q k) \}$ for $0 \leq i \leq n-1$ (which was also observed in \cite{G17}). It is clear that $ \deg \phi_i \leq k$ since $\deg F = k$. To show that $\deg \phi_i \leq (q-1)(1+ \log_qk)$, start by writing
\[(c_0 + c_1x + \ldots + c_{m-1}x^{m-1})^t = \prod_j\left(c_0 + c_1x^{q^j} + \cdots + c_{m-1}x^{(m-1)q^j} \right)^{t_j}\]
where $t = (\ldots t_2t_1t_0)_{q}$ is the base $q$ expansion of $t$. In particular, this shows that $\deg \phi_i \leq \max_{t \leq k} D_q(t) \leq (q-1)(1+ \log_q k)$ where $D_q(t)$ is the sum of the digits of $t$ under base $q$ expansion.

Now, suppose $A \subset P_{q,n}$ is a non-empty subset of polynomials such that there do not exist distinct polynomials $p_1(x),p_2(x) \in A$  with $p_1(x) - p_2(x) = F(b(x))$ for some $b(x) \in \F_q[x]$. Under the identification of $P_{q,n}$ with $\F_q^n$ as explained above, we can instead consider $A$ as a subset of $\F_q^n$ with the property that $(A-A) \cap \im(\Phi) = \{ 0 \}$. We can apply Lemma~\ref{lem:weight1} to the set $\Phi^{-1}(0)$ which gives us a polynomial $\mu\in \F_q[x_1,\dots,x_m]$ with the property $\deg \mu \leq |\Phi^{-1}(0)| - 1 \leq k-1$ such that $\sum_{a \in \Phi^{-1}(0)} \mu(a) \neq 0$. Let $P \in \F_q[x_1, \ldots, x_n]$ be the polynomial obtained from using this polynomial $\mu$ in Lemma~\ref{lem:Poly} and setting $d = \min\{ k, (q-1)(1+ \log_q k) \}$. Recall $P$ has the following properties:
\begin{itemize}
    \item $P(0) \neq 0$,
    \item $\supp(P) \subset \im(\Phi)$, and
    \item $\deg P \leq (q-1)(n- m/d) + (k-1)/d$.
\end{itemize}

Now, as in Lemma~\ref{lem:CLP} applied to the polynomial $P$, consider the $q^n \times q^n $ matrix $M$ indexed by elements of $\F_q^n$, where for all $(u,v)\in \F_q^n \times \F_q^n$ the $(u,v)$ entry is given by $P(u-v)$. Since $P(0) \neq 0$, all diagonal entries of $M$ are nonzero. We claim that in the $\abs{A} \times \abs{A}$ submatrix of $M$ indexed by elements of $A$, all off-diagonal entries are zero. Indeed, for any distinct $u,v \in A$ we have we have $u-v \in (A-A)\setminus \{0\}$. As $(A-A) \cap \im (\Phi) = \{ 0 \}$, it follows that $u-v \not \in \im(\Phi)$ and therefore $u-v \not \in \supp(P)$, which means that $M_{uv} = P(u-v) = 0$. So indeed all off-diagonal entries of the $\abs{A} \times \abs{A}$ submatrix of $M$ indexed by elements of $A$ are zero. Therefore this submtarix is a diagonal matrix and has rank equal to $|A|$. We can conclude that $|A|\le \rank(M)$, and together with Lemma~\ref{lem:CLP} we obtain the bound \[|A| \leq \rank(M) \leq 2\abs{\left\{(\alpha_1,\ldots, \alpha_n)\in \{0, 1, \ldots, q-1 \}^n : \alpha_1 + \cdots + \alpha_n \leq \frac{1}{2} \left( (q-1)\left( n- \frac{m}{d}\right) + \frac{k-1}{d} \right ) \right\}}.\] Note that this last expression is equal to the sum of all the coefficients corresponding to monomials of degree at most $\frac{1}{2} \left( (q-1)(n-m/d) + (k-1)/d \right)$ in the expansion of $(1+x+ \cdots + x^{q-1})^n$. In particular, for every $0 < x <1$, we have
\[ \frac{|A|}{2} \cdot  x^{\frac{1}{2} \left( (q-1)(n-m/d) + (k-1)/d \right)} \leq (1+x+ \cdots +x^{q-1})^n.\]
We can conclude that
\[\abs{A} \leq 2\cdot  \frac{(1+x+ \cdots + x^{q-1})^n}{x^{\frac{1}{2} \left( (q-1)(n-m/d) + (k-1)/d \right)}}\le 2 \cdot \frac{(1+x+ \cdots + x^{q-1})^n}{x^{\frac{1}{2}(q-1)(n-n/(kd)) + (k-1)/(2d)}}= \frac{2}{x^{(k-1)/(2d)}} \cdot \left(\frac{1+x+ \cdots + x^{q-1}}{x^{\frac{1}{2}(q-1)(1-1/(kd)) }}\right)^n\]
for every $0 < x <1$. Note that the infimum in (\ref{eq:t-exp}) is actually attained by some value $0<x<1$ (indeed, the expression on the right side goes to infinity for $x\to 0$, and the derivative of this expression is positive at $x=1$). For this value of $x$ (which depends only on $q$ and $k$, but not on $n$), we obtain
\[ \abs{A} \leq \frac{2}{x^{(k-1)/(2d)}} \cdot \left(\frac{1+x+ \cdots + x^{q-1}}{x^{\frac{1}{2}(q-1)(1-1/(kd)) }}\right)^n = c_{q,k} \cdot (t_{q,k})^{n},\]
where $c_{q,k}=2/x^{(k-1)/(2d)}$ is a constant only depending on $q$ and $k$.
\end{proof}

\section{S\'ark\"ozy's Theorem in \texorpdfstring{$\F_{q}$}{Fq}}

In this section, we give an application of Theorem~\ref{thm:S} by proving Corollary~\ref{cor:Fq}, which is a variant of S\'ark\"ozy's Theorem in $\F_q$, where $q = p^n$ is a prime power.

\begin{proof}[Proof of Corollary~\ref{cor:Fq}]
Suppose $A \subset \F_q$ is a non-empty subset that does not contain distinct elements $a_1,a_2 \in A$ such that $a_1 - a_2 = F(b)$ for some $b \in \F_q$.

We make an identification of $\F_q$ with the polynomial ring setting in Theorem~\ref{thm:S}. Note that $\F_q$ can be written as a $n$-dimensional vector space over $\F_p$ with basis $1, \beta, \ldots, \beta^{n-1}$ where $\beta$ is the root of any degree $n$ irreducible polynomial over $\F_p$. In particular, this means that we can think of elements of $\F_p$ as polynomials of degree less than $n$ in $\F_p[\beta]$. 

Applying Theorem~\ref{thm:S} to $A$ under this identification of $\F_q$ with $\F_p[\beta]$, we get $|A| \leq c_{p,k}\cdot (t_{p,k})^n$ as desired. \end{proof}

\bibliographystyle{plain}
\bibliography{ref.bib}

\end{document}